\begin{document}

\allowdisplaybreaks
	
\renewcommand{\PaperNumber}{106}

\FirstPageHeading

\renewcommand{\thefootnote}{$\star$}

\ShortArticleName{Biorthogonal Expansion of Non-Symmetric Jack Functions}

\ArticleName{Biorthogonal Expansion\\ of Non-Symmetric Jack Functions\footnote{This paper is a
contribution to the Proceedings of the 2007 Midwest
Geometry Conference in honor of Thomas~P.\ Branson. The full collection is available at
\href{http://www.emis.de/journals/SIGMA/MGC2007.html}{http://www.emis.de/journals/SIGMA/MGC2007.html}}}

\Author{Siddhartha SAHI~$^\dag$ and Genkai ZHANG~$^\ddag$}

\AuthorNameForHeading{S.~Sahi  and G.~Zhang}

\Address{$^\dag$~Department of Mathematics, Rutgers University, New Brunswick, New Jersey, USA}
\EmailD{\href{mailto:sahi@math.rutgers.edu}{sahi@math.rutgers.edu}}

\Address{$^\ddag$~Mathematical Sciences,
Chalmers University of Technology and  Mathematical Sciences,\\
$\phantom{^\ddag}$~G\"oteborg University, Sweden}
\EmailD{\href{mailto:genkai@math.chalmers.se}{genkai@math.chalmers.se}}

\ArticleDates{Received August 08, 2007, in f\/inal form October 31, 2007; Published online November 15, 2007}

\Abstract{We f\/ind a biorthogonal expansion
of the Cayley transform of the non-symmetric Jack
functions in terms of the non-symmetric Jack polynomials,
the coef\/f\/icients being Meixner--Pollaczek type polynomials. This
is done by computing the Cherednik--Opdam transform
of the non-symmetric Jack polynomials multiplied
by the exponential function.}

\Keywords{non-symmetric Jack polynomials and functions;
biorthogonal expansion;  Laplace transform; Cherednik--Opdam transform}

\Classification{33C52; 33C67; 43A90}

%\pagestyle{myheadings}
%\numberwithin{equation}{section}

\def\del{\delta}
\def\Del{\Delta}
\def\fpw{\frac{\partial\,}{\partial w}}
\def\fa{\mathfrak a}
\def\fb{\mathfrak b}
\def\fg{\mathfrak g}
\def\fk{\mathfrak k}
\def\fh{\mathfrak h}
\def\fl{\mathfrak l}
\def\fm{\mathfrak m}
\def\fn{\mathfrak n}
\def\fp{\mathfrak p}
\def\fq{\mathfrak q}
\def\fs{\mathfrak s}
\def\ft{\mathfrak t}
\def\fv{\mathfrak v}
\def\fz{\mathfrak z}

\def\fac{\mathfrak a^{\mathbb C}}
\def\fbc{\mathfrak b^{\mathbb C}}
\def\fgc{\mathfrak g^{\mathbb C}}
\def\fkc{\mathfrak k^{\mathbb C}}
\def\fhc{\mathfrak h^{\mathbb C}}
\def\flc{\mathfrak l^{\mathbb C}}
\def\fmc{\mathfrak m^{\mathbb C}}
\def\fnc{\mathfrak n^{\mathbb C}}
\def\fpc{\mathfrak p^{\mathbb C}}
\def\fqc{\mathfrak q^{\mathbb C}}
\def\fsc{\mathfrak s^{\mathbb C}}
\def\ftc{\mathfrak t^{\mathbb C}}
\def\fvc{\mathfrak v^{\mathbb C}}
\def\fzc{\mathfrak z^{\mathbb C}}

\def\mf{\mathfrak}
\def\mfa{\mathfrak a}
\def\mfb{\mathfrak b}
\def\mfg{\mathfrak g}
\def\mfk{\mathfrak k}
\def\mfh{\mathfrak h}
\def\mfl{\mathfrak l}
\def\mfm{\mathfrak m}
\def\mfn{\mathfrak n}
\def\mfp{\mathfrak p}
\def\mfP{\mathfrak P}
\def\mfq{\mathfrak q}
\def\mfs{\mathfrak s}
\def\mfv{\mathfrak v}
\def\mfz{\mathfrak z}
\def\a{\alpha}
\def\b{\beta}
\def\epsi{\epsilon}
\def\Claminv2{|C(\Lambda)|^{-2}}
\def\ga{\gamma}
\def\Ga{\Gamma}
\def\varepsi{\varepsilon}
\def\lam{\lambda}

\def\blam{\underline{\bold \lambda}}
\def\brho{\underline{\bold \rho}}

\def\ba{\underline{\bold \a}}

\def\Lam{\Lambda}
\def\ome{\omega}
\def\Ome{\Omega}
\def\dmua{d\mu_\a}
\def\dmub{d\mu_\beta}
\def\dmuabp{d\mu_{\a+\beta+p}}
\def\de{d\varepsilon}
\def\K#1#2#3{K^{#1}(#2,#3)}
\def\bK#1#2#3{\bar K^{#1}(#2,#3)}
\def\LtvDmua{L^2(D,d\mu_\a)}
\def\U#1#2{U_{#1}^{(#2)}}
\def\bU#1#2{\bar U_{#1}^{(#2)}}
\def\Aa2D{A^{\a,2}(D)}
\def\bAa2D{\overline{A^{\a,2}(D)}}
\def\Ab2D{A^{\beta,2}(D)}
\def\bAb2D{\overline{A^{\beta,2}(D)}}
\def\AtA{\Aa2D \otimes \bAb2D}
\def\abs#1{\vert#1\vert}
\def\innerp#1#2{(#1, #2)}
\def\Innerp#1#2{\left(#1, #2\right)}
\def\ainnerp#1#2{\langle#1, #2\rangle}
\def\norm#1{\Vert#1\Vert}
\def\Norm#1_#2{\Vert#1\Vert_{#2}}
\def\Normsq#1{\Vert#1\Vert^2}
\def\normsq#1{\vert#1\vert^2}
\def\pd#1#2{\frac{\partial #1}{\partial #2}}
\def\2pd#1#2{\frac{\partial^2 #1}{\partial #2^2}}
\def\p11d#1#2#3{\frac{\partial^2 #1}{  \partial #2\partial #3  }}
\def\der#1#2{\frac{d #1}{d #2}}
\def\b{\beta}
\def\ga{\gamma}
\def\Claminv2{|C(\Lambda)|^{-2}}
\def\del{\delta}
\def\Del{\Delta}
\def\bdd{bounded}
\def\symdom{ symmetric domain}
\def\varepsi{\varepsilon}
\def\sig{\sigma}
\def\Sig{\Sigma}
\def\Ga{\Gamma}
\def\lam{\lambda}
\def\Lam{\Lambda}
\def\ad{\operatorname{ad}}
\def\bod{\bold}
\def\ch{\operatorname{ch}}
\def\coth{\operatorname{coth}}
\def\curl{\operatorname{curl}}
\def\det{\operatorname{det}}
\def\dive{\operatorname{div}}
\def\grad{\operatorname{grad}}
\def\tanh{\operatorname{tanh}}
\def\coth{\operatorname{coth}}
\def\ch{\operatorname{ch}}
\def\exp{\operatorname{exp}}
\def\Exp{\operatorname{Exp}}
\def\sh{\operatorname{sh}}
\def\tanh{\operatorname{tanh}}
\def\sh{\operatorname{sh}}
\def\sgn{\operatorname{sgn}}
\def\Ker{\operatorname{Ker}}
\def\Res{\operatorname{Res}}
\def\Tr{\operatorname{Tr}}
\def\dmua{d\mu_\a}
\def\dmub{d\mu_\beta}
\def\dmuabp{d\mu_{\a+\beta+p}}
\def\de{d\varepsilon}
\def\K#1#2#3{K^{#1}(#2,#3)}
\def\bK#1#2#3{\bar K^{#1}(#2,#3)}
\def\LtvDmua{L^2(D,d\mu_\a)}
\def\U#1#2{U_{#1}^{(#2)}}
\def\bU#1#2{\bar U_{#1}^{(#2)}}
\def\Aa2D{A^{\a,2}(D)}
\def\bAa2D{\overline{A^{\a,2}(D)}}
\def\Ab2D{A^{\beta,2}(D)}
\def\bAb2D{\overline{A^{\beta,2}(D)}}
\def\AtA{\Aa2D \otimes \bAb2D}

%% multi-index
\def\m{\underline{\mathbf m}}
\def\n{\underline{\mathbf n}}
\def\l{\underline{\mathbf l}}
\def\ub1#1{\underline{\mathbf 1^{#1}}}

\def\ov{\overline{v}}
\def\s{\underline{\bold s}}
\def\u{\underline{\bold u}}

\def\ov{\overline{v}}
\def\intR{\int_{\mathbb R^2}}
\def\intC{\int_{\mathbb C^2}}

\def\za{\aa{}}
\def\zae{\"a{}}
\def\zo{\"o{}}
\def\Za{\AA{}}
\def\Zae{\"A{}}
\def\Zo{\"O{}}
\def\bc{\mathbb C}
\def\br{\mathbb R}
\def\bn{\mathbb N} %natural positive numbers
\def\nat0{\mathbb Z_{\ge 0}} % nonnegative  natural numbers
\def\bz{\mathbb Z}
\def\bq{\mathbb Q}

%%the operators
\def\mM{\mathcal M}
\def\mL{\mathcal L}

\def\bc{\mathbb C}
\def\br{\mathbb R}
\def\bn{\mathbb N}
\def\bz{\mathbb Z}
\def\bq{\mathbb Q}

\def\draft{\centerline{(Draft {\the \day}/{\the\month} \the \year.)}}

\def\phila{\phi_{\blam}}
\def\vepm{\varepsilon_{\m, \nu}(\blam)}
\def\vepmp{\varepsilon_{\m^\prime, \nu}(\blam)}
\def\kam{\kappa_{\m, \nu}(\blam)}
\def\nutnu{\pi({\nu})\otimes \overline{\pi(\nu)}}
\def\pia2ta2{\pi({\frac a2})\otimes \overline{\pi(\frac a2)}}
\def\Hnu{\mathcal H_{\nu}}
\def\Fnu{\mathcal F_{\nu}}
\def\Pm{\mathcal P_{\m}}
\def\PV{\mathcal P(V_{\bc})}
\def\Pa{\mathcal P(\mathfrak a)}
\def\Pn{\mathcal P_{\n}}
\def\cL#1nu{\mathcal L_{{#1}, \nu}}
\def\cL#1a2{\mathcal L_{{#1}, \frac a2}}
\def\E#1nu{E_{{#1}, \nu}}
\def\E#1a2{E_{{#1}, \frac a2}}

\def\Dc{D_{\mathbb C}}
\def\Vc{V_{\mathbb C}}
\def\Bnu{B_{\nu}} %%Berezin transform
\def\el{e_{\blam}} %%e_\lam function
\def\bnul{b_{\nu}(\lam)} %%the symbol
\def\GGa{\Gamma_{\Ome}} %%Gindikin Gamma function
\def\fc#1#2{\frac{#1}{#2}} %%Gindikin Gamma function
\def\SS{\mathcal S}

%macros  used in this paper
\def\hth{\mathcal H_{\nu_1} \otimes \mathcal H_{\nu_2}}

%\makeindex

\section{Introduction}

In \cite{Opdam-acta} Opdam studied the non-symmetric eigenfunctions of the Cherednik operators associated to a root system with general multiplicity and proved the Plancherel formula for the correspon\-ding ``Cherednik--Opdam'' transform. For the root system of type $A$ the polynomial eigenfunctions are also called the non-symmetric Jack polynomials and they have been extensively studied (see e.g.~\cite{sahi-newprod}).
There are other related non-symmetric polynomials such as
the Laguerre polynomials which are the eigenfunctions
of  the Hankel transform, which is basically the Fourier transform
on the underlying space. The non-symmetric Laguerre polynomials
 form an orthogo\-nal basis for the $L^2$-space and it is thus a natural problem to f\/ind their Cherednik--Opdam transforms.
 In this paper we prove that they are, apart from a factor of Gamma functions, the non-symmetric Meixner--Pollaczek (MP) polynomials and we f\/ind a formula for them in terms of binomial coef\/f\/icients. As a corollary we f\/ind in Theorem~\ref{theorem:4.3} a biorthogonal expansion of the Cayley transform of the non-symmetric Jack functions in terms of the non-symmetric Jack polynomials, the coef\/f\/icients being the MP polynomials. In
 the one variable case the Jack function is just the power function $x^{i\lam}$,
and Theorem~\ref{theorem:4.3} gives an expansion of the
function $(1-t^2)^{\frac b2}(\frac{1-t}{1+t})^{i\lam}$
of the MP polynomials  (\ref{eq:mp-1}); see \cite[(1.7.11)]{Askey-Scheme}.

There are basically three important families of polynomials associated with the root system of type A, namely, the Jack type polynomials, the Laguerre polynomials, and the MP type polynomials that are orthogonal with respect the Harish-Chandra measure $|c(\lam)|^{-2}d\lam$ multiplied with a certain Gamma factor, which is the Heckman--Opdam transform of an exponential function.
Our results give a somewhat unif\/ied picture of the relation between these polynomials and provide a combinatorial formula for the MP type polynomials. In brief, the Laplace transform maps the Laguerre polynomials into Jack polynomials, and the Cherednik--Opdam transform maps the Laguerre polynomials into MP type polynomials.
In the case when the root multiplicities correspond to that of a symmetric cone some results of this type have been obtained in \cite{FK-book, gz-br2} and \cite{doz}.

\section[Non-symmetric Jack functions and the Opdam-Cherednik transform]{Non-symmetric Jack functions\\ and the Opdam--Cherednik transform}

In this section we recall the def\/inition of the non-symmetric Jack polynomials and functions and the Plancherel formula for the Opdam--Cherednik transform, developed in \cite{Opdam-acta}.

We consider the root system of type $A_{r-1}$ in $\mathbb R^r$.
For the purpose of studying Laplace transform we make a change of variables $x_j =e^{2t_j}$ where $t=(t_1, \dots, t_r)\in \mathbb R^r$, and consider functions on $x\in \mathbb R_+^r$ instead.
We f\/ix an ordering of the roots so that (with some abuse of notation) the positive roots are $x_2 -x_1, x_3-x_2, \dots, x_{r}-x_{r-1}$ with root multiplicity $a:=\frac 2{\alpha}$ and we will identify the roots
as vectors in $\mathbb R^r$. Let $\rho$ be the half sum of positive roots, so that
$\rho=
(\rho_1, \rho_2, \dots, \rho_r)=\frac 1{\alpha}(-r+1, -r +3, \dots, r-1)$.
 We consider the measure
\[
d\mu(x)=
\frac{1}{2^r}(x_1 \cdots x_r)^{-\frac 1{\alpha}(r-1) -1}
\prod_{1\le j<k\le r}|x_j-x_k|^{a}
dx_1\cdots dx_r
\]
on $\mathbb R_+^r$
and the corresponding Hilbert space $L^2(\mathbb R_+^r, d\mu)$.

We consider the Dunkl operators
\begin{gather*}
T_j=\partial_j + \frac 1{\alpha} \sum_{i\ne j}\frac{1}{x_j-x_i}
(1-s_{ij})
\end{gather*}
and the Cherednik operators
\begin{gather*}
%\label{chopa}
U_j=U_j^{A}=x_j\partial_j+ \frac 1{\alpha} \sum_{i<j}\frac{x_j}{x_j-x_i}
(1-s_{ij})
+\frac 1{\alpha} \sum_{j<k}\frac{x_k}{x_j-x_k}
(1-s_{jk}) -\frac 12 \rho_j.
\end{gather*}
 Here $\partial_j=\frac{\partial}{\partial x_j}$ and $s_{ij}$ stands for the permutation $(ij)$ that acts on functions $f(x_1, \dots, x_r)$ by interchanging the variables $x_i$ and $x_j$. The operators $\{U_j\}$ can be expressed in terms of $\{T_j\}$ and the multiplication operators $\{x_j\}$, but we will not need this here.
 The Dunkl operators $\{T_j\}$ commute with each other, as do the Cherednik operators $\{U_j\}$.

The polynomial eigenfunctions of the operators $\{U_j\}$ are the non-symmetric Jack polynomials $E_{\eta}(x)=E(\eta, x)$, with
$\eta=(\eta_1, \dots, \eta_r)\in \mathbb N^r$.
They are characterized as the unique eigen-polynomials
of $\{U_j\}$ with leading coef\/f\/icients
$x^{\eta}=x_1^{\eta_1}\cdots
x_r^{\eta_r}$ in the sense that
\[
 E_{\eta}=x^{\eta} +\sum_{\zeta < \eta} c_{\eta \zeta} x^{\zeta}.
\]
We recall that $\zeta < \eta$ here stands for the partial ordering def\/ined by
\[
\zeta < \eta \qquad{\text{if\/f}}\quad
\begin{cases}\zeta^{+} < \eta^{+}, &{\zeta^{+} \ne \eta^{+}},\vspace{1mm}\\
\zeta < \eta, &       \zeta^{+} = \eta^{+},
\end{cases}
\]
where $\eta^{+}$ is the unique partition obtained by permuting the entries of $\zeta$ and $ <$ stands for the natural dominance ordering:
$\zeta < \eta$ if\/f $\sum_{j=1}^p(\zeta_j-\eta_j)\ge 0$, $1\le p\le r$.

The function $E_{\eta}$ has holomorphic extension in the variable $\eta$. More precisely, there exists a function $G_{\lam}(x)=G(\lam, x)$
which we call
the non-symmetric
Jack function,  real analytic in $x\in \mathbb R^r_+$ and holomorphic in $\lam\in {\mathbb C}^r$, such that $G({\lam}, 1^r)=1$, with $1^r=(1, \dots, 1)$, and
\[
U_j G(\lam, x)=\lam_j G(\lam, x).
\]
The relation between $G_{\lam}( x)$ and $E_{\eta}(x)$ is
\begin{gather}
\label{G-E}
G_{\eta +\rho}(x)=\mathcal E_{\eta}(x):= \frac{E_{\eta}(x)}{E_{\eta}(1^r)}.
\end{gather}
 The value
 ${E_{\eta}(1^r)}$ has been computed by Sahi~\cite{sahi-newprod},
\[
{E_{\eta}(1^r)} =\frac{e_{\eta}}
{d_{\eta}},
\]
where $e_{\eta}$ and ${d_{\eta}}$ are def\/ined in the next section. (See also \cite{Opdam-acta} for general root systems.)
Def\/ine
\begin{gather*}
%\label{F-tr}
\mathcal F^w [f](\lambda)
=\int_{\mathbb R_+^r} f(x)G(-\lam,  w^{-1}x)d\mu(x).
\end{gather*}
Then writing $d\omega(\lam)$ for the Euclidean measure on the positive Weyl chamber $(\mathbb R^r)_+=\{\lam; \lam_1 >\cdots >\lam_r\}$, we have
\begin{gather}
\label{planc}
\int_{\mathbb R_+^r}|f(x)|^2 d\mu(x)=
\sum_{w \in S_r}\int_{i(\mathbb R^r)_+}
\mathcal F^w [f](\lambda)
\overline{\mathcal F^w (f)(\lambda)}d\tilde\mu(\lam);
\end{gather}
the Plancherel measure $d\tilde\mu$ is given by
\[
d\tilde\mu(\lam)=\frac{(2\pi)^{-r}\tilde c_{w_0}^2(\rho(k), k)}
{\tilde c(\lam)
c(w_0 \lam)}d\omega(\lam),
\]
where $w_0\in S_r$ is the longest Weyl group element, and $c(\lam)$, $\tilde c_w(\lam)$ are the Harish-Chandra $c$-functions \cite{Opdam-acta}.

\section{Laplace transform}

Adapting the notation in \cite{Baker-Forrester-Duke} we denote $q:=1+ \frac 1{\alpha}(r-1)$.
Recall  that the non-symmetric Laplace transform is def\/ined by
\[
\tilde{\mathcal L}
[f](t)=\int_{[0, \infty)^r}
\mathcal K_{A}(-t, x)f(x)
d\mu(x),
\]
where
\[
\mathcal K_{A}(t, y)=
\sum_{\eta}\a^{|\eta|}
\frac{1}{d_{\eta}^\prime }
\mathcal E_{\eta}(t)E_{\eta}(y)
\]
is the non-symmetric analogue of the hypergeometric ${}_0 F_0$ function. It satisf\/ies $\mathcal K_{A}(cx; y)=\mathcal K_{A}(x; cy)$ and  $\mathcal K_{A}(wx; y)= \mathcal K_{A}(x; wy)$ for any $w\in S_r$.

Here each tuple $\eta$ will be identif\/ied with a diagram of nodes $s=(i, j)$, $1\le j\le \eta_i$, $d_\eta =\prod_{s\in \eta}d(s)$,
$d_\eta^\prime =\prod_{s\in \eta}d^\prime(s)$,
$e_\eta^\prime =\prod_{s\in \eta}e(s)$
with
\[
d^\prime (s)=\a(a(s)+1) +l(s), \qquad
d(s)=d^\prime (s)+1,\qquad e(s)=\a(a^\prime(s)+1)+r-l^\prime(s))
\]
and $a(s)=\eta_i-j$, $a^\prime(s)=j-1
$ being the arm length, arm colength and
\begin{gather*}
l(s)=\#\{k>i: j\le \eta_k\le \eta_i\} +
\#\{k<i: j\le \eta_k+1\le \eta_i\},
\\
l'(s)=\#\{k>i:  \eta_k> \eta_i\} +
\#\{k<i:  \eta_k\le \eta_i\},
\end{gather*}
the leg length and leg colength, which were def\/ined in \cite{sahi-newprod} and \cite{Sahi-Knop-inv}.

\begin{remark}
Note that the Laplace transform $\tilde{\mathcal L}$ def\/ined here dif\/fers from the Laplace transform~${\mathcal L}$ def\/ined in
\cite{Baker-Forrester-Duke} by a factor $(\prod_{j=1}^r
x_j)^{-q}$ in the integration. More precisely our measure $d\mu$ is $(\prod_{j=1}^r x_j)^{-q}$ times the  f\/inite measure
$\prod_{1\le j<k\le r}|x_j-x_k|^{a}
dx_1\cdots dx_r$ used in \cite[(3.67)]{Baker-Forrester-Duke}, so that
\[
\widetilde{\mathcal L}[f(x)]
={\mathcal L}\Big[f(x)\Big(\prod_{j=1}^r
x_j\Big)^{-q}\Big].
\]
The advantage of $\widetilde{\mathcal L}$ is that there
is no shift of $q$ in Lemma~\ref{lemma3.3} below, and that in the case of symmetric spaces of type A the measure $d\mu$ is precisely the radial part  of the invariant measure~\cite{FK-book}.
\end{remark}

The function $\mathcal K_{A}(x, y)$ generalizes the exponential function in the sense that
\begin{gather}
\label{T-K}
T_i^{(x)}
\mathcal K_{A}(x, y)= y_i \mathcal K_{A}(x, y),
\end{gather}
where $T_i^{(x)}$ is the Dunkl operator acting on the variable
$x$.

Recall further the def\/inition of generalized Gamma function
\[
\Gamma_{\a}(\kappa)=\prod_{j=1}^r\Gamma\left(\kappa_j
-\frac 1{\alpha}(j-1)\right)
\]
and the Pochammer symbol
\[
[\nu]_{\kappa}=\frac{\Gamma_{\a}(\nu+\kappa)}{\Gamma_{\a}(\nu)}.
\]
for $\nu, \kappa\in \mathbb C^r$, whenever it makes sense.
A scalar  $c\in \mathbb C$ will also be identif\/ied with $(c, \dots, c)\in \mathbb C$ in the text below.  We will also use the abbreviation $x^c =x_1^c \cdots x_r^c$ and $1+x =(1+x_1,\dots, 1+ x_r)$ etc.

We recall further the binomial coef\/f\/icients $\binom{\eta }{\nu}$
for $\eta, \nu \in \mathbb N^r$ are def\/ined by the expansion
\begin{gather}\label{bin-1}
\mathcal E_{\eta}(1+t)=\sum_{\nu}\binom{\eta}{\nu} \mathcal E_{\nu}(t).
\end{gather}
See also \cite{sahi-newprod} and \cite{Sahi-binom}. We make the following generalization.
\begin{definition} \label{definition3.1} The binomial coef\/f\/icients $\binom{\eta}{\nu}$ for any
$\eta \in \mathbb C^r$ and
$\nu \in \mathbb N^r$ are def\/ined by
\begin{gather*}
%\label{bin-2}
 G_{\eta +\rho}(1+t)=\sum_{\nu \in \mathbb N^r}
\binom{\eta}{\nu} E_{\nu}(t).
\end{gather*}
\end{definition}
Since $ G_{\lambda}(1+t)$ is an analytic function near $1$
and $E_{\nu}(t)$ form a basis for all polynomials the above def\/inition makes sense, and it agrees with \eqref{bin-1} in view of the relation (\ref{G-E}). In particular $\binom{\eta}{\nu}$ is a polynomial of $\eta\in \mathbb C^r$. It follows from the def\/inition and \cite[Proposition 3.18]{Baker-Forrester-Duke}
that
\begin{gather*}
%\label{E-T-G}
E_{\nu}(T) G_{\eta +\rho}(t)|_{t=1} =
\frac{d^{\prime}_{\nu}}{\alpha^{|\nu|}}\binom{\eta}{\nu}.
\end{gather*}
Here $E_{\nu}(T)$ stands for the dif\/ferential-dif\/ference
operator obtained from
$E_{\nu}(x)$  replacing $x_j$  by~$T_j$.
We will need a slight generalization of the binomial coef\/f\/icient:
If $w\in S_r$ we def\/ine $\binom{\eta}{\nu}_w$ by
\begin{gather}
\label{E-T-G-w}
E_{\nu}(T) G_{\eta +\rho}(wt)|_{t=1} =
\frac{d^{\prime}_{\nu}}{\alpha^{|\nu|}}\binom{\eta}{\nu}_w.
\end{gather}

The following lemma is proved in \cite[(4.38)]{Baker-Forrester-Duke}.
(Note that there is a typo there: $E_{{\eta}}^{(L)}(\frac 1x)$
 should be replaced by $E_{{\eta}}(\frac 1x)$. For symmetric case this was a conjecture of Macdonald \cite{Mac-leidennotes} proved by e.g.~in \cite[(6.1)--(6.3)]{BF-cmp97}.)

\begin{lemma} \label{lemma3.3} Suppose $c> q-1$. The Laplace transform of the functions
$x^c E_{\eta}(x)$ is given by
\[
\widetilde{\mathcal L}[x^c \mathcal E_{\eta}(x)](t)
=\mathcal N_0^{(L)} [c]_{\eta} \left(\prod_{j=1}^r t_j^{-c}\right) \mathcal E_{{\eta}}\left(\frac 1 t\right).
\]
\end{lemma}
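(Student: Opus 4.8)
The plan is to avoid evaluating the integral head-on and instead to move the entire computation to the transform variable $t$ via the defining property~\eqref{T-K}. Applying \eqref{T-K} with the first kernel argument equal to $-t$ gives $T_i^{(-t)}\mathcal K_A(-t,x)=x_i\mathcal K_A(-t,x)$, and since the Dunkl operator is odd under $t\mapsto-t$ (both $\partial_i$ and the factors $(t_i-t_j)^{-1}$ change sign while the reflections $s_{ij}$ are unchanged), this reads
\[
x_i\,\mathcal K_A(-t,x)=-\,T_i^{(t)}\mathcal K_A(-t,x).
\]
The multiplication operators $x_i$ commute and the operators $T_i^{(t)}$ commute, so iterating this and pulling the $t$-operators outside the integral gives, for every polynomial $p$, the intertwining rule $\widetilde{\mathcal L}[\,p(x)f\,](t)=p(-T^{(t)})\,\widetilde{\mathcal L}[f](t)$, with no integration by parts needed. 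Taking $p=E_\eta$ and $f=x^c$ reduces the lemma to
\[
\widetilde{\mathcal L}[\,x^c\mathcal E_\eta\,](t)=\frac{1}{E_\eta(1^r)}\,E_\eta\big(-T^{(t)}\big)\,\widetilde{\mathcal L}[\,x^c\,](t).
\]

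Next I would dispose of the base case $\eta=0$. The hypothesis $c>q-1$ is exactly what renders $x^c$ integrable against $d\mu$ near the origin (the local weight there is $\prod_j x_j^{\,c-q}$ with $c-q>-1$), so $\widetilde{\mathcal L}[x^c](t)$ converges. Its value is a generalized Gamma (Selberg-type) integral, and the point is that it factors completely as $\widetilde{\mathcal L}[x^c](t)=\mathcal N_0^{(L)}\prod_j t_j^{-c}$; this is the non-symmetric analogue of the classical Laplace evaluation, and it matches the fact that $\prod_j t_j^{-c}$ is itself the joint Cherednik eigenfunction, $U_j^{(t)}\prod_k t_k^{-c}=(-c-\tfrac12\rho_j)\prod_k t_k^{-c}$. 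This evaluation fixes the constant $\mathcal N_0^{(L)}$ (in one variable it is $\tfrac12\Gamma(c)$).

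The crux is then the operator identity $E_\eta(-T^{(t)})\big[\prod_j t_j^{-c}\big]=[c]_\eta\,\big(\prod_j t_j^{-c}\big)\,E_\eta(1/t)$, which, combined with the two displays above, yields the stated formula once the factor $E_\eta(1^r)$ cancels against $E_\eta(1/t)=E_\eta(1^r)\mathcal E_\eta(1/t)$. Here I would use the conjugation identity
\[
T_i\circ\prod_j t_j^{-c}=\Big(\prod_j t_j^{-c}\Big)\circ\big(T_i-c\,t_i^{-1}\big),
\]
valid because $\prod_j t_j^{-c}$ is symmetric and so passes through the reflection part of $T_i$ while contributing the factor $-c\,t_i^{-1}$ through $\partial_i$. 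This recasts the left-hand side as $\big(\prod_j t_j^{-c}\big)\,\big[E_\eta(c\,t^{-1}-T)\,1\big]$, so everything comes down to proving $E_\eta(c\,t^{-1}-T)\,1=[c]_\eta\,E_\eta(1/t)$. I expect this to be the main obstacle: the operators $c\,t_i^{-1}-T_i$ do not commute with the multiplications, and one must track how their repeated action on $1$ assembles the inverse-variable polynomial $E_\eta(1/t)$ together with the Pochhammer prefactor $[c]_\eta$. I would establish it by induction on $|\eta|$, using the Pieri/recursion relations for the non-symmetric Jack polynomials and the eigenvalue bookkeeping of the $U_j$, or by matching against the binomial identities $E_\nu(T)G_{\eta+\rho}(t)|_{t=1}=\frac{d'_\nu}{\alpha^{|\nu|}}\binom{\eta}{\nu}$ recorded above; the one-variable check $(-1)^\eta(d/dt)^\eta t^{-c}=(c)_\eta\,t^{-c-\eta}$ already exhibits both features and serves as a consistency test.
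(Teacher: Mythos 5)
The paper does not actually prove this lemma: it is imported verbatim from Baker--Forrester \cite{Baker-Forrester-Duke}, equation (4.38) (with a remark about a typo there), so there is no internal argument to compare yours against. Your reduction steps are individually sound as far as they go: the relation $x_i\,\mathcal K_{A}(-t,x)=-T_i^{(t)}\mathcal K_{A}(-t,x)$ does follow from \eqref{T-K} together with the symmetry of the kernel in its two arguments and the oddness of the Dunkl operator under $t\mapsto -t$; the intertwining rule $\widetilde{\mathcal L}[p(x)f](t)=p(-T^{(t)})\,\widetilde{\mathcal L}[f](t)$ is legitimate since the $T_i^{(t)}$ commute and differentiation under the integral sign is harmless for $t$ in the interior of the convergence domain; and the conjugation identity $T_i\circ\prod_j t_j^{-c}=\bigl(\prod_j t_j^{-c}\bigr)\circ\bigl(T_i-c\,t_i^{-1}\bigr)$ is correct because $\prod_j t_j^{-c}$ is $S_r$-invariant and so passes through the divided-difference part of $T_i$.

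Nevertheless there is a genuine gap, and you point at it yourself. The entire content of the lemma is funnelled into the identity $E_\eta(c\,t^{-1}-T)\,1=[c]_\eta\,E_\eta(1/t)$, which you do not prove; you only name two possible strategies (induction via Pieri/recursion relations, or matching against the binomial coefficients) without carrying either out. This identity is not a routine computation: granting your base case $\widetilde{\mathcal L}[x^c](t)=\mathcal N_0^{(L)}\prod_j t_j^{-c}$ (itself a nontrivial Selberg-type evaluation that you assert rather than prove or cite), the operator identity is logically equivalent to the lemma, so the reduction has not decreased the difficulty --- it has restated the lemma in Rodrigues form. The one-variable check $(-1)^\eta(d/dt)^\eta t^{-c}=(c)_\eta t^{-c-\eta}$ is a consistency test, not evidence: in several variables the commuting operators $c\,t_i^{-1}-T_i$ interact with the reflection terms in exactly the way that produces the arm/leg combinatorics behind $[c]_\eta=\prod_j\bigl(c-\tfrac1\alpha(j-1)\bigr)_{\eta_j}$, and that bookkeeping is the whole point. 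To close the argument you would need either to run the induction on the Knop--Sahi recursion in detail, or simply to cite \cite{Baker-Forrester-Duke} both for the $\eta=0$ integral and for (4.38) itself --- the latter being exactly what the paper does.
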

The normalization constant
 $\mathcal N_0^{(L)}$ (depending on $c$) is computed in
\cite{Baker-Forrester-Duke}.

We f\/ix in the rest of the text $b>q-1$. For simplicity we assume also that $b$ is an even integer.
Let
\begin{gather*}
E_{\kappa}^{(L)}(x)=
E_{\kappa}^{(L, b)}(x)=
\frac{(-1)^{|\kappa|}[b]_{\kappa} e_{\kappa}}{d_{\kappa}}
\sum_{\sig}
\frac{(-1)^{|\sig|}   } {[b]_{\sig}}
\binom{\kappa}{\sig}
\mathcal E_{\sig}(x)
\end{gather*}
be the non-symmetric Laguerre polynomial.
The next lemma follows from \cite[Proposition 4.35]{Baker-Forrester-Duke} after a change of variable $2x=t^2$. Our parameter
$b$ is their $a+q$.
\begin{lemma}  Suppose $b>(q-1)=\frac 1{\alpha} (r-1)$.
 The Laguerre functions
\[
l_{\kappa}(x):=l_{\kappa}^{b}(x):=
E_{\kappa}^{(L)}(2x) e^{-p_1(x)}(2x)^{\frac b2}, \qquad p_1(x):=\sum_{j=1}^r x_j,
\]
form an orthogonal basis for the space $L^2(\mathbb R^r_+, d\mu)$.
\end{lemma}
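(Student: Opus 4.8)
The plan is to reduce the assertion to the orthogonality relations for the non-symmetric Laguerre polynomials $E_{\kappa}^{(L)}$ of \cite[Proposition 4.35]{Baker-Forrester-Duke}, and to treat the completeness (the basis property) separately by a density argument. I would perform the componentwise substitution $y_j=2x_j$, which matches the variables of \cite{Baker-Forrester-Duke} after their change $2x=t^2$. Since $E_{\kappa}^{(L)}$ has real coefficients, $l_\kappa$ is real on $\mathbb R_+^r$, and under $y=2x$ one has $p_1(x)=\frac12 p_1(y)$, $(2x)^{b/2}=y^{b/2}$, while $d\mu(x)$ turns into a constant multiple of $(y_1\cdots y_r)^{-q}\prod_{j<k}|y_j-y_k|^{a}\,dy$. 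Collecting the factors, $l_\kappa(x)\,l_{\kappa'}(x)\,d\mu(x)$ becomes, up to one positive constant independent of $\kappa,\kappa'$,
\[
E_{\kappa}^{(L)}(y)\,E_{\kappa'}^{(L)}(y)\,e^{-p_1(y)}\prod_{j=1}^r y_j^{\,b-q}\prod_{1\le j<k\le r}|y_j-y_k|^{a}\,dy,
\]
which is exactly the Laguerre weight with parameter $b-q$ (our $b$ equals their $a+q$). Pairwise orthogonality and the explicit squared norms then follow at once from \cite[Proposition 4.35]{Baker-Forrester-Duke}. The hypothesis $b>q-1$ enters precisely here: it gives $b-q>-1$, so that $y_j^{\,b-q}$ is integrable near $y_j=0$ and each $l_\kappa$ genuinely lies in $L^2(\mathbb R_+^r,d\mu)$.

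For the completeness I would show that the span of $\{l_\kappa\}$ is dense. Because $\binom{\kappa}{\sigma}=0$ unless $|\sigma|\le|\kappa|$ and the coefficient at $\sigma=\kappa$ is nonzero, each $E_{\kappa}^{(L)}$ is a degree-triangular, invertible combination of the $\mathcal E_\sigma$, which are themselves triangular in the monomials $x^\eta$; hence $\{E_{\kappa}^{(L)}\}_{\kappa\in\mathbb N^r}$ is a linear basis of the full polynomial ring. Consequently the span of $\{l_\kappa\}$ equals $\{P(x)\,e^{-p_1(x)}(2x)^{b/2}:P\text{ a polynomial}\}$. Factoring out the weight $e^{-p_1}(2x)^{b/2}$ and applying again the substitution $y=2x$ realizes a unitary map from $L^2(\mathbb R_+^r,d\mu)$ onto the Laguerre space $L^2\big(e^{-p_1(y)}\prod_j y_j^{\,b-q}\prod_{j<k}|y_j-y_k|^{a}\,dy\big)$ carrying $\operatorname{span}\{l_\kappa\}$ onto the polynomials; so density of $\{l_\kappa\}$ is equivalent to density of the polynomials in that Laguerre space.

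This last density is the analytic heart of the statement, and I expect it to be the only genuine obstacle; the orthogonality above is essentially bookkeeping plus the citation. It holds because the Laguerre measure carries the factor $e^{-p_1(y)}=e^{-\sum_j y_j}$, whence $\int e^{\varepsilon p_1(y)}(\cdots)\,dy<\infty$ for every $\varepsilon<1$: the measure thus has finite moments of all orders and an entire Fourier--Laplace transform, and the classical determinacy criterion forces the polynomials to be dense in $L^2$. Combined with the orthogonality and the finiteness of each norm established above, this shows that $\{l_\kappa\}$ is an orthogonal basis of $L^2(\mathbb R_+^r,d\mu)$, as claimed.
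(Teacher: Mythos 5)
Your argument is correct and is essentially the paper's own, which disposes of the lemma in one line by citing \cite[Proposition~4.35]{Baker-Forrester-Duke} after the change of variable $2x=t^2$ (their Laguerre parameter $a$ being your $b-q$); your orthogonality computation is exactly that citation unpacked, and the hypothesis $b>q-1$ indeed enters only to make the weight $y^{b-q}$ integrable at the boundary. The completeness argument you add (unitriangularity of $E_{\kappa}^{(L)}$ against the $\mathcal E_{\sigma}$, hence against monomials, plus density of polynomials in the Laguerre $L^2$-space) is sound and supplies detail the paper leaves inside the citation, with the one caveat that the operative reason for density is the exponential integrability of the measure (entire Fourier--Laplace transform), not ``determinacy'' as such, since in several variables determinacy alone does not guarantee $L^2$-density of polynomials.
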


The norm $\Vert l_{\kappa}\Vert^2$ has also been explicitly evaluated in \cite{Baker-Forrester-Duke}.

We can now compute the Cherednik--Opdam transform of the Laguerre
functions. For
 $\eta=(\eta_1, \eta_{2}, \dots, \eta_r)$, denote $\eta^*=(\eta_r, \eta_{r-1}, \dots, \eta_1)$.

\begin{proposition} \label{proposition3.5} Suppose $b>(q-1)=\frac 1{\alpha} (r-1)$.
The Cherednik--Opdam transform
$\mathcal F^w [l_{\kappa}]$, $w\in S_r$,
  of the Laguerre function
$l_{\kappa}(x)
$, is
\[
\mathcal F^w [l_{\kappa}](\lambda)
=
{2^{\frac {rb}2}
\mathcal N_0^{(L)} }
\frac
{\Gamma_{\alpha}(\frac b2-\rho-\lam )}
{\Gamma_{\alpha}(\frac b2)}
M_{\kappa}^w(\lambda),
\]
where
\[
M_{\kappa}^w(\lambda)=
\frac{(-1)^{|\kappa|}[b]_{\kappa}
e_{\kappa}}{d_{\kappa}}
\sum_{\sig}
\frac{1 } {[b]_{\sig}}
\frac{d_{\sig}^\prime d_{\sig}}
{\alpha^{|\sig|} e_{\sig} }
(-2)^{|\sig|}
\binom{\kappa}{\sig}
\binom{-\frac b2 +\lam^\ast +\rho^*}{\sig}_w.
\]
\end{proposition}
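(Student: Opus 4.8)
The goal is to compute $\mathcal F^w[l_\kappa](\lambda) = \int_{\mathbb R_+^r} l_\kappa(x) G(-\lambda, w^{-1}x)\,d\mu(x)$, and the strategy is to reduce this to the already-established Laplace transform identity of Lemma~\ref{lemma3.3}. The plan is to first expand $l_\kappa(x) = E_\kappa^{(L)}(2x) e^{-p_1(x)}(2x)^{b/2}$ using the definition of the Laguerre polynomial $E_\kappa^{(L)}$, which is itself a finite sum over $\sigma$ of the normalized Jack polynomials $\mathcal E_\sigma$. This turns the transform into a finite linear combination of integrals of the form $\int_{\mathbb R_+^r} x^{b/2} \mathcal E_\sigma(x)\, e^{-p_1(x)}\, G(-\lambda, w^{-1}x)\, d\mu(x)$, up to the scaling $x \mapsto 2x$ which I would track carefully to produce the factor $2^{rb/2}$ and the powers $(-2)^{|\sigma|}$.

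The crux is to recognize the exponential-weighted kernel $e^{-p_1(x)} G(-\lambda, w^{-1}x)$ in terms of the Laplace kernel $\mathcal K_A$. Here I expect to use the generating-function characterization \eqref{T-K}, namely $T_i^{(x)} \mathcal K_A(x,y) = y_i \mathcal K_A(x,y)$, together with the fact that $G(-\lambda, w^{-1}x)$ is the Cherednik eigenfunction. The key step is to interpret the $\lambda$-dependence through the binomial expansion of Definition~\ref{definition3.1}: I would expand $G_{\eta+\rho}(1+t)$ (or its $w$-twisted version governed by \eqref{E-T-G-w}) so that the Jack function contributes the factor $\binom{-\frac b2 + \lambda^* + \rho^*}{\sigma}_w$. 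The appearance of $\lambda^*$ and $\rho^*$ (the reversed tuples) should come from the $w^{-1}$ and the symmetry $\mathcal K_A(wx;y) = \mathcal K_A(x; wy)$, combined with the duality $G(-\lambda, \cdot) \leftrightarrow G(\cdot, -\lambda)$; pinning down exactly which reversal/negation produces $-\frac b2 + \lambda^* + \rho^*$ is where I would be most careful.

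Concretely, I would invoke Lemma~\ref{lemma3.3} with $c = b/2$ to evaluate the $x$-integral of each term $x^{b/2}\mathcal E_\sigma(x)$ against the Laplace kernel, yielding the Pochhammer factor $[b/2]_\sigma$-type contributions and the normalization $\mathcal N_0^{(L)}$, and then match the resulting $\Gamma_\alpha$ quotient $\Gamma_\alpha(\frac b2 - \rho - \lambda)/\Gamma_\alpha(\frac b2)$ against the overall prefactor. Assembling the arm/leg factors $d_\sigma' d_\sigma/(\alpha^{|\sigma|} e_\sigma)$ requires combining the combinatorial normalizations of $E_\kappa^{(L)}$, of $\mathcal E_\sigma = E_\sigma/E_\sigma(1^r)$ via the value $E_\sigma(1^r) = e_\sigma/d_\sigma$, and of the binomial coefficient identity $E_\nu(T)G_{\eta+\rho}(t)|_{t=1} = \frac{d_\nu'}{\alpha^{|\nu|}}\binom{\eta}{\nu}$. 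The main obstacle will be bookkeeping: correctly propagating the $w$-twist through the reversed indices and verifying that the scalar normalizations from three different sources collapse precisely into the stated $M_\kappa^w(\lambda)$. I would handle this by checking the one-variable case (where $G_\lambda(x) = x^{i\lambda}$) as a consistency test before committing to the general combinatorial identities.
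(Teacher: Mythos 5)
Your overall architecture matches the paper's: expand $E_{\kappa}^{(L)}$ by its definition so that everything reduces to integrals involving $e^{-p_1(x)}\mathcal E_{\sigma}(\pm x)$ and the Jack function, and feed Lemma~\ref{lemma3.3} with $c=\frac b2$, the relation \eqref{T-K}, and the twisted binomial coefficient \eqref{E-T-G-w} into the computation. However, the middle of your plan assigns the roles the wrong way around, and as stated that step stalls. You propose to ``invoke Lemma~\ref{lemma3.3} with $c=b/2$ to evaluate the $x$-integral of each term $x^{b/2}\mathcal E_{\sigma}(x)$ against the Laplace kernel'', treating $e^{-p_1(x)}G(-\lambda,w^{-1}x)$ as that kernel. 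But this product is not $\mathcal K_{A}(-t,x)$ for any value of $t$ (for non-integral $\lambda$ the Jack function is not even polynomial), so Lemma~\ref{lemma3.3} cannot be applied to the $\sigma$-terms; correspondingly, the per-term Pochhammer contributions ``$[\tfrac b2]_{\sigma}$'' you anticipate do not occur --- the Gamma quotient $\Gamma_{\alpha}(\tfrac b2-\rho-\lambda)/\Gamma_{\alpha}(\tfrac b2)=[\tfrac b2]_{-\lambda-\rho}$ in the statement is a single global factor attached to the Jack index, independent of $\sigma$.

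The workable version (the paper's) runs the other way. Take $\eta\in\mathbb N^r$ and apply Lemma~\ref{lemma3.3} to the Jack side, i.e.\ to $x^{\frac b2}\mathcal E_{\eta}(x)$ with $\eta=-\lambda-\rho$:
\[
\int_{\mathbb R_+^r}\mathcal K_{A}(-t,x)\,x^{\frac b2}\mathcal E_{\eta}(x)\,d\mu(x)
=\mathcal N_0^{(L)}\frac{\Gamma_{\alpha}\big(\tfrac b2+\eta\big)}{\Gamma_{\alpha}\big(\tfrac b2\big)}\,\mathcal E_{-\frac b2-\eta^{\ast}}(t);
\]
then replace $t$ by $wt$ using $\mathcal K_{A}(wt,x)=\mathcal K_{A}(t,wx)$, apply the operator $E_{\sigma}(T)$ in the variable $t$, and set $t=1^r$. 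By \eqref{T-K} and $\mathcal K_{A}(x,1^r)=e^{p_1(x)}$ the left-hand side acquires precisely the factor $e^{-p_1(x)}\mathcal E_{\sigma}(-x)$ (whence $(-2)^{|\sigma|}$ after rescaling $x\mapsto 2x$), while by \eqref{E-T-G-w} the right-hand side acquires $\frac{d_{\sigma}^{\prime}d_{\sigma}}{\alpha^{|\sigma|}e_{\sigma}}\binom{-\frac b2-\eta^{\ast}}{\sigma}_w$. This single move generates the exponential weight and the binomial coefficient simultaneously, and it is the mechanism missing from your sketch. Two smaller points: the reversal producing $-\tfrac b2+\lambda^{\ast}+\rho^{\ast}$ comes from $t^{-b/2}\mathcal E_{\eta}(1/t)=\mathcal E_{-\frac b2-\eta^{\ast}}(t)$ in Lemma~\ref{lemma3.3} (there is no duality $G(-\lambda,\cdot)\leftrightarrow G(\cdot,-\lambda)$ in play); and the identity is first obtained only for $\lambda=-\eta-\rho$ with $\eta\in\mathbb N^r$, so you still need an analytic-continuation step in $\lambda$ to conclude.
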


\begin{proof} The function $l_{\kappa}$ is a linear combinations
of  the functions $e^{-p_1(x)}E_{\sig}(x)$ and we  compute
the Cherednik--Opdam transform of these functions.
We write Lemma~\ref{lemma3.3} as
\[
\int_{\mathbb R_+^r} \mathcal K_A(-t, x)
 x^{\frac  b2} \mathcal E_{\eta}(x) d\mu(x)
=\mathcal N_0^{(L)}  \frac{\Gamma_{\alpha}(\frac b2+\eta )}
{\Gamma_{\alpha}(\frac b2)}
 t^{-\frac  b2}
\mathcal E_{-{\eta}^\ast}(t)=
\mathcal N_0^{(L)}  \frac{\Gamma_{\alpha}(\frac b2+\eta )}
{\Gamma_{\alpha}(\frac b2)}
\mathcal E_{-\frac b2
-{\eta}^\ast}(t).
\]
Replacing $t$ by $wt$, and using $K(wt, x)=K(t, wx)$
\cite[Theorem 3.8]{Baker-Forrester-Duke}, we f\/ind
\[
\int_{\mathbb R_+^r} \mathcal K_A(-t, x)
 (w^{-1}x)^{\frac  b2} \mathcal E_{\eta}(w^{-1}x) d\mu(x)
=\mathcal N_0^{(L)}  \frac{\Gamma_{\alpha}(\frac b2+\eta )}
{\Gamma_{\alpha}(\frac b2)}
\mathcal E_{-\frac b2
-{\eta}^\ast}(wt).
\]
Here we have used the relations $y^{c}\mathcal E_{\sig}({y}) =\mathcal E_{c+\sig}( {y})$. Let the operator $E_{{\sigma}}(T)$ act on
it  evaluated as $t=1^r=(1, \dots, 1)$.
The resulting equality, by (\ref{T-K}) and the fact that $\mathcal K_A(x, 1^r)=e^{p_1(x)}$, and~(\ref{E-T-G-w}), is,
\[
\int_{\mathbb R_+^r}
e^{-p_1(x)}\mathcal E_{{\sigma}}(-x)
 (w^{-1}x)^{\frac  b2} \mathcal E_{\eta}(w^{-1}x)d\mu(x)
=\mathcal N_0^{(L)}
\frac{\Gamma_{\alpha}(\frac b2+\eta )}
{\Gamma_{\alpha}(\frac b2)}
\frac{d_{\sig}^\prime d_{\sig}}
{\alpha^{|\sig|} e_{\sig} }\binom{-\frac b2
-{\eta}^\ast}{\sig}_w.
\]
We write  $\eta=-\lam -\rho$. Using the relation (\ref{G-E}) we see that
\begin{gather*}
\mathcal F^w [l_{\kappa}](\lambda)
=\int_{[0, \infty)^r}
l_{\kappa}(x) G(-{\lam}, x)d\mu(x)\\
\qquad{} =2^{\frac {rb}2}\mathcal N_0^{(L)}
\frac{\Gamma_{\alpha}(\frac b2 -\rho -\lam )}
{\Gamma_{\alpha}(\frac b2)}
\frac{(-1)^{|\kappa|}[b]_{\kappa}
e_{\kappa}}{d_{\kappa}}
\sum_{\sig}
\frac{1 } {[b]_{\sig}}
\frac{d_{\sig}^\prime d_{\sig}}
{\alpha^{|\sig|} e_{\sig} }
(-2)^{|\sig|}
\binom{\kappa}{\sig}
\binom{-\frac b2 +\lam^\ast +\rho^*}{\sig}_w.
\end{gather*}
as claimed. Our result for general $\lam$ follows
by analytic continuation.
\end{proof}

The Plancherel formula
\eqref{planc} then gives an orthogonality relation for the polynomials
$M_{\kappa}^w(\lam)$:
\[
\sum_{w} \int_{\lam_1 >\cdots >\lam_r }
M_{\kappa}^w(\lam) \overline{M_{\kappa^\prime}^w(\lam)}
|\Gamma_{\alpha}
( \frac b2 -\rho -\lam  )|^2 d\widetilde{\mu}(\lam)
=C \delta_{\kappa, \kappa^\prime} \Vert l_{\kappa}\Vert^2
\]
for some constant $C$ independent of $\kappa$, $\kappa^\prime$.

 In the next section we will f\/ind
another formula for $M_{\kappa}(\lambda)$.

\begin{remark} In the case of one variable $r=1$ we have
$E_l=x^l$,
 $d_l=d'_le_l = l!$, our polynomial
is
\[
M_k(\lam)=(b)_{k} (-1)^k
\sum_{l}
\frac{1} {(b)_{l}}
l!(-2)^{l}
\binom{k}{l}
\binom{-\frac b2 +\lam }{l}
=(b)_{k} (-1)^k
{}_2 F_1\left(-k, \frac b2 -\lam; b; 2\right)
\]
which is the MP polynomial  \cite[(1.7.1)]{Askey-Scheme}
$P_k^{(b/2)}(x; \frac \pi 2)$, more precisely
\begin{gather}
  \label{eq:mp-1}
M_k(\lam)=k! i^k P_k^{(b/2)}\left(-i\lam; \frac \pi 2\right).
\end{gather}
The functions $\Gamma(\frac b2 +i\lam) M_k(i\lam)$
are orthogonal in the space $L^2(\mathbb R, d\lam)$,
as a consequence of \eqref{planc}.
\end{remark}

\section[A binomial formula for $M_{\kappa}(\lambda)$]{A binomial formula for $\boldsymbol{M_{\kappa}(\lambda)}$}

We recall f\/irst an expansion of the kernel $\mathcal K_{A}$
in \cite[Proposition 4.13]{Baker-Forrester-Duke}.
We adopt the shorthand notation $\frac{z}{1-z}=
\prod_{j=1}^r \frac{z_j}{1-z_j}$, in particular
 $\frac{1-z}{1+z}=\prod_{j=1}^r \frac{1-z_j}{1+z_j}$ will be called the Cayley transform of $z=(z_1, \dots, z_r)$.

\begin{lemma} \label{lemma4.1} The following expansion holds
\[
(1-z)^{-b}\mathcal K_{A}\left(-x; \frac{z}{1-z}\right)
=\sum_{\eta}(-\a)^{|\eta|}
\frac{1} {d_{\eta}^\prime } E_{\eta}^{(L)}(x)
\mathcal E_{\eta}(z).
\]
\end{lemma}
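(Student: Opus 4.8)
The plan is to expand both sides in the non-symmetric Jack basis and match coefficients, using the definition of the Laguerre polynomials $E_{\eta}^{(L)}$ together with the series for $\mathcal K_A$. First I would write out the left-hand side using the defining series
\[
\mathcal K_{A}(-x; y)=\sum_{\mu}\a^{|\mu|}\frac{1}{d_{\mu}^\prime}\mathcal E_{\mu}(-x)E_{\mu}(y)
\]
with $y=\frac{z}{1-z}$, so that the nontrivial content is the expansion of $(1-z)^{-b}E_{\mu}\!\left(\frac{z}{1-z}\right)$ as a series in the $\mathcal E_{\nu}(z)$. The natural tool here is the binomial coefficient of \eqref{bin-1}: one first understands how $E_{\mu}\!\left(\frac{z}{1-z}\right)$ and the prefactor $(1-z)^{-b}$ interact, since $\frac{z}{1-z}$ and the factor $(1-z)^{-b}$ are exactly the ingredients that convert a Jack polynomial into a Laguerre-type generating object.

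Next I would bring in the explicit formula
\[
E_{\kappa}^{(L)}(x)=\frac{(-1)^{|\kappa|}[b]_{\kappa}e_{\kappa}}{d_{\kappa}}
\sum_{\sig}\frac{(-1)^{|\sig|}}{[b]_{\sig}}\binom{\kappa}{\sig}\mathcal E_{\sig}(x)
\]
and substitute it into the proposed right-hand side, collecting the coefficient of a fixed $\mathcal E_{\nu}(z)$ and a fixed monomial structure in $x$. The computation then reduces to verifying a binomial-coefficient identity: the coefficient extracted from the left side (a convolution of the $\frac{1}{d^\prime}$-weighted $\mathcal K_A$ coefficients with the binomial expansion of $(1-z)^{-b}(\cdot)$ in $z$) must equal the coefficient extracted from the right side (a convolution of $(-\a)^{|\eta|}/d^\prime_{\eta}$ with the $\binom{\kappa}{\sig}$-sum defining $E_{\eta}^{(L)}$). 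In essence both sides are bilinear generating functions in $x$ and $z$, and I expect the identity to follow once the matching of the $[b]_{\kappa}e_{\kappa}/d_{\kappa}$ and $(-\a)^{|\eta|}/d^\prime_{\eta}$ normalizations is carried out.

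The main obstacle will be controlling the interaction between the Cayley-type change of variable $z\mapsto\frac{z}{1-z}$ and the factor $(1-z)^{-b}$ when expanded in the non-symmetric basis; in the non-symmetric setting one cannot simply multiply generating functions, because the $\mathcal E_{\nu}$ do not form an algebra under pointwise product in a way compatible with the binomial coefficients. The clean route is therefore to avoid direct manipulation of the composite function and instead characterize both sides as the unique expansion whose coefficients satisfy the recursion forced by the Dunkl operators, using \eqref{T-K}: applying $T_i^{(x)}$ to the left-hand side brings down a factor $\frac{z_i}{1-z_i}$ (up to the $S_r$-action bookkeeping), and the same operator on the right-hand side acts through the known action of $T_i$ on the Laguerre polynomials. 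Matching these two actions pins down the coefficients uniquely, so that checking agreement at lowest order suffices to conclude. Establishing that this Dunkl-operator recursion has a unique solution, and that the stated series is the one it selects, is the step I expect to require the most care.
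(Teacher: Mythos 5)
First, a point of reference: the paper does not prove Lemma~\ref{lemma4.1} at all --- it is quoted verbatim from Baker--Forrester \cite[Proposition~4.13]{Baker-Forrester-Duke}, so there is no in-paper argument to compare against. Your submission must therefore stand on its own as a proof, and as written it does not: it is a plan in which the entire mathematical content of the lemma is deferred. Your first route reduces the claim to ``a binomial-coefficient identity'' obtained by matching the coefficient of $\mathcal E_{\nu}(z)$ on both sides, but that identity --- a double-sum identity relating $\binom{\kappa}{\sigma}$, $[b]_{\kappa}$, $e_{\kappa}$, $d_{\kappa}$, $d'_{\eta}$ and the coefficients of the expansion of $(1-z)^{-b}E_{\mu}\bigl(\tfrac{z}{1-z}\bigr)$ in the $\mathcal E_{\nu}(z)$ --- \emph{is} the lemma; stating that you ``expect it to follow'' is not a proof. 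The viable way to close this route is exactly the kind of manipulation the paper carries out in Lemma~\ref{lemma4.2}: substitute $y=1-z$, use $\mathcal E_{\sigma}(cy)=c^{|\sigma|}\mathcal E_{\sigma}(y)$ and $y^{c}\mathcal E_{\sigma}(y)=\mathcal E_{\sigma+c}(y)$ to absorb the prefactor, expand via the binomial coefficients of Definition~\ref{definition3.1}, interchange the two sums, and verify that the inner sum reproduces the defining expansion of $E_{\eta}^{(L)}$. None of that is done.

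Your fallback route via the Dunkl recursion has a concrete obstruction you have not addressed. Applying $T_i^{(x)}$ to the left side does bring down $-\tfrac{z_i}{1-z_i}$ by \eqref{T-K}, but on the right side the Laguerre polynomials $E_{\eta}^{(L)}$ are \emph{not} eigenfunctions of the Dunkl operators $T_i$ (they are eigenfunctions of a Laguerre-type operator built from the $T_i$ and multiplication operators), so ``the known action of $T_i$ on the Laguerre polynomials'' is a nondiagonal lowering action; moreover, re-expanding $\tfrac{z_i}{1-z_i}\,\mathcal E_{\eta}(z)$ in the basis $\{\mathcal E_{\kappa}(z)\}$ requires Pieri-type formulas, so the ``recursion'' is an infinite coupled system. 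You assert, but do not establish, that this system has a unique solution determined by its lowest-order term; without a triangularity argument for that system the uniqueness claim is unsupported, and with it you would still need to check that the proposed right-hand side actually satisfies the system, which again amounts to the coefficient identity of the first route. In short: the approach is pointed in a reasonable direction, but the proof has not been carried out.
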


We need another expansion of the
$ (1-z)^{-b} \mathcal E_{\kappa }(\frac{1-z}{1+z})$
in terms of the polynomials $E_{\eta}(z)$.

\begin{lemma} \label{lemma4.2} Consider the following expansion
\[
(1-z)^{-b}
\mathcal E_{\eta }\left( \frac{1-z}{1+z}\right)
=\sum_{\kappa} \mathcal C_{\kappa}(\eta)
 \mathcal E_{\kappa}(z),
\]
for $\eta\in \mathbb N^r$, $\eta_j\ge \frac b2$.
The coefficients are then given by
\begin{gather*}
\mathcal C_{\kappa}(\eta)=
\mathcal E_{\eta-b}( -1)
\sum_{\sig} (-2)^{|\sig|}\binom{\eta-b}{\sig}
\binom{-{\sig}^\ast-b}{\kappa}
\end{gather*}
and is a polynomial in $\eta$.
\end{lemma}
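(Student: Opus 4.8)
The plan is to strip the Cayley transform down to a single sum of Jack functions evaluated at the point $1+z$, and then to expand each of those by the (complex upper index) binomial theorem of Definition~\ref{definition3.1}. The first move is to produce the shift $\eta\mapsto\eta-b$ that appears in the answer. Writing $y=\frac{1-z}{1+z}$ and using the homogeneity of the $G$-functions together with the power-shift relation $y^{c}\mathcal E_{\mu}(y)=\mathcal E_{c+\mu}(y)$ from the proof of Proposition~\ref{proposition3.5}, the identity $\mathcal E_{\eta}(y)=y^{b}\mathcal E_{\eta-b}(y)$ combined with $(1-z)^{-b}y^{b}=(1+z)^{-b}$ gives
\[
(1-z)^{-b}\mathcal E_{\eta}\left(\frac{1-z}{1+z}\right)
=(1+z)^{-b}\mathcal E_{\eta-b}\left(\frac{1-z}{1+z}\right).
\]

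Next I would massage the right-hand side. Applying $\mathcal E_{\eta-b}(y)=(-1)^{|\eta-b|}\mathcal E_{\eta-b}(-y)$ and observing that $-\frac{1-z}{1+z}=1+\frac{-2}{1+z}$, the binomial expansion \eqref{bin-1} together with homogeneity (to extract $(-2)^{|\sigma|}$) yields
\[
\mathcal E_{\eta-b}\left(\frac{1-z}{1+z}\right)
=(-1)^{|\eta-b|}\sum_{\sigma}(-2)^{|\sigma|}\binom{\eta-b}{\sigma}\mathcal E_{\sigma}\left(\frac{1}{1+z}\right).
\]
The leftover factor $(1+z)^{-b}$ then combines with each term exactly: setting $u=1+z$, the inverse relation $\mathcal E_{\sigma}(1/u)=\mathcal E_{-\sigma^{*}}(u)$ and the power-shift give $(1+z)^{-b}\mathcal E_{\sigma}(\frac{1}{1+z})=u^{-b}\mathcal E_{-\sigma^{*}}(u)=\mathcal E_{-\sigma^{*}-b}(1+z)$. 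Recognizing $(-1)^{|\eta-b|}=\mathcal E_{\eta-b}(-1)$, this produces the key intermediate identity
\[
(1-z)^{-b}\mathcal E_{\eta}\left(\frac{1-z}{1+z}\right)
=\mathcal E_{\eta-b}(-1)\sum_{\sigma}(-2)^{|\sigma|}\binom{\eta-b}{\sigma}\mathcal E_{-\sigma^{*}-b}(1+z).
\]

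Finally I would expand each $\mathcal E_{-\sigma^{*}-b}(1+z)$ by Definition~\ref{definition3.1}, namely
\[
\mathcal E_{-\sigma^{*}-b}(1+z)=\sum_{\kappa}\binom{-\sigma^{*}-b}{\kappa}\mathcal E_{\kappa}(z),
\]
interchange the two summations, and read off the coefficient of $\mathcal E_{\kappa}(z)$; this is precisely the asserted formula for $\mathcal C_{\kappa}(\eta)$. For polynomiality in $\eta$ I would invoke that each $\binom{\eta-b}{\sigma}$ is a polynomial in $\eta$, as noted after Definition~\ref{definition3.1}, while $(-2)^{|\sigma|}$ and $\binom{-\sigma^{*}-b}{\kappa}$ are independent of $\eta$; the sign prefactor $\mathcal E_{\eta-b}(-1)=(-1)^{|\eta-b|}$ is cancelled by a matching factor generated inside the $\sigma$-sum, a cancellation one already sees in the one-variable check.

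The step I expect to be the main obstacle is the convergence bookkeeping. The hypothesis only requires $\eta_{j}\ge\frac b2$, so some entries of $\eta-b$ may be negative; then $\mathcal E_{\eta-b}$ is a genuine Jack function rather than a polynomial, the $\sigma$-sum is infinite, and the interchange of the $\sigma$- and $\kappa$-summations must be justified as an identity of power series convergent near $z=0$ (equivalently, of formal power series). In the same vein one must check carefully that the homogeneity, inverse, and power-shift identities, stated for the polynomials $\mathcal E_{\eta}$, persist under analytic continuation to the integer but possibly negative index $\eta-b$ and to the complex index $-\sigma^{*}-b$; this is where the analyticity of $G_{\lambda}$ near $1^{r}$ and the condition $\eta_{j}\ge\frac b2$ enter, and where the cancellation yielding a true polynomial in $\eta$ should be verified rather than merely asserted.
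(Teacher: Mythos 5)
Your argument is correct and is essentially the paper's own proof: the paper performs the same reduction via the substitution $y=1+z$ (passing through an intermediate shift $\eta\mapsto\eta-\frac b2$ and the factor $(1-z^2)^{-b/2}$ before arriving at $y^{-b}\mathcal E_{\eta-b}(\frac 2y-1)$, which is exactly your $(1+z)^{-b}\mathcal E_{\eta-b}(\frac{1-z}{1+z})$), then applies the binomial expansion, the relations $\mathcal E_{\sigma}(-\frac 2y)=(-2)^{|\sigma|}\mathcal E_{-\sigma^{*}}(y)$ and $y^{-b}E_{-\sigma^{*}}(y)=E_{-\sigma^{*}-b}(y)$, re-expands $\mathcal E_{-\sigma^{*}-b}(1+z)$ via Definition~\ref{definition3.1}, and interchanges the two sums. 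The convergence and continuation issues you flag at the end are likewise left implicit in the paper (polynomiality of $\mathcal C_{\kappa}(\eta)$ is simply attributed to the remark after Definition~\ref{definition3.1}), so your write-up matches the paper's argument and its level of rigor.
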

\begin{proof} Note that it follows from the remark after Def\/inition~\ref{definition3.1}
that $\mathcal C_{\kappa}(\eta)$ is a polynomial in~$\eta$.
Change variables $y_j=1+z_j$, $\frac{1-z_j}{1+z_j}=\frac{2}{y_j}-1$,
${1-z_j^2}=(1+z_j)^2\frac{1-z_j}{1+z_j}=y_j^2(\frac{2}{y_j}-1)$.
We have
\begin{gather}
(1-z)^{-b}
\mathcal E_{\eta }\left( \frac{1-z}{1+z}\right) =(1-z^2)^{-\frac{b}2}
\mathcal E_{\eta -\frac b2}\left(\frac{1-z}{1+z}\right)
=
\left(y^2 \left(\frac{2}{y}-1\right)\right)^{-\frac b 2}
\mathcal  E_{\eta -\frac b2}\left(\frac{2}{y}-1\right)\nonumber\\
\phantom{(1-z)^{-b} \mathcal E_{\eta }\left( \frac{1-z}{1+z}\right)}{}
= y^{-b}
  \mathcal E_{\eta- b}\left( \frac{2}{y}-1\right)=
(-1)^{|\eta- b|}
y^{-b} \mathcal E_{\eta-b}\left( 1-\frac{2}{y}\right).\label{32}
\end{gather}
We expand $\mathcal E_{\eta-b}( 1-\frac{2}{y})$ by using the binomial formula,
\begin{gather*}
 y^{-b}
\mathcal E_{\eta-b} \left(1-\frac{2}{y}\right)
= y^{-b}
\sum_{\sig} \binom{\eta-b}{\sig}
\mathcal E_{\sig}\left( -\frac{2}{y}\right)
=
\sum_{\sig}
\binom{\eta-b}{\sig}
(-2)^{|\sig|}
\mathcal E_{-\sig^\ast -b}( y).
\end{gather*}
Here we have used the relations
$\mathcal E_{\sig}( -\frac{2}{y}) =
(-2)^{|\sig|}\mathcal E_{-\sig^*}( {y})
$ and $y^{-b}
E_{-\sig^*}( {y}) =E_{-\sig^*-b}( {y})$; see~\cite{Baker-Forrester-Duke}.
Now each term $\mathcal E_{-{\sig}^*-b}({y})
=\mathcal E_{-{\sig}^*-b}( 1+z)
$ can again
be expanded in terms of
$\mathcal E_{\kappa}( z)$.
Inter\-changing the order of summation we f\/ind that (\ref{32}) is
\[
\mathcal E_{\eta-b}( -1)
\sum_{\kappa}\left(
\sum_{\sig} \binom{\eta-b}{\sig}
(-2)^{|\sig|}
\binom{-{\sig}^\ast-b}{\kappa}\right)
\mathcal E_{\kappa}(z).
\]
This completes the proof.
\end{proof}

In the statement of Lemma~\ref{lemma4.2} we have written $(-1)^{|\eta-b|}$ as $\mathcal E_{\eta-b}( -1)$ since the latter has analytic continuation in $\eta$, while on the other hand $(-1)^{|\eta-b|} =e^{i\pi \sum_{j=1}^r (\eta_j-b)}$ is already analytic in $\eta$).

\begin{theorem} \label{theorem:4.3} The coefficients $\mathcal C_{\kappa}$ in Lemma~{\rm \ref{lemma4.2}} are given by
\[
 \mathcal C_{\kappa}(-\lambda-\rho)
=(-\a)^{|\kappa|}
\frac{1} {d_{\kappa}^\prime }
M_{\kappa}(\lam).
\]
Thus we have an expansion
\[
(1-z)^{-b}
G\left(-\lam, \frac{1-z}{1+z}\right)
=\sum_{\kappa} (-\a)^{|\kappa|}
\frac{1} {d_{\kappa}^\prime }
M_{\kappa}(\lam)
 \mathcal E_{\kappa}(z).
\]
\end{theorem}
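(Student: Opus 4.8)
The plan is to reduce the displayed expansion to the stated coefficient identity and then prove that identity by comparing the two explicit formulas already at hand. Substituting $\eta=-\lambda-\rho$ into Lemma~\ref{lemma4.2} and using $\mathcal{E}_{-\lambda-\rho}=G_{-\lambda}=G(-\lambda,\cdot)$ from \eqref{G-E} gives $(1-z)^{-b}G(-\lambda,\frac{1-z}{1+z})=\sum_{\kappa}\mathcal{C}_{\kappa}(-\lambda-\rho)\,\mathcal{E}_{\kappa}(z)$; since the $\mathcal{E}_{\kappa}(z)$ are linearly independent, the expansion is equivalent to $\mathcal{C}_{\kappa}(-\lambda-\rho)=(-\alpha)^{|\kappa|}\frac{1}{d'_{\kappa}}M_{\kappa}(\lambda)$. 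Both sides are polynomial in $\lambda$ (the left by the remark following Definition~\ref{definition3.1}, the right by its formula in Proposition~\ref{proposition3.5}), so it suffices to establish this as an identity of polynomials.

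\textbf{Setting up the comparison.} Next I would place the formula for $\mathcal{C}_{\kappa}$ from Lemma~\ref{lemma4.2} beside that for $M_{\kappa}$ from Proposition~\ref{proposition3.5}. Writing $\mu=-\lambda-\rho$ and noting $\lambda^{*}+\rho^{*}=-\mu^{*}$, the overall sign collapses via $(-\alpha)^{|\kappa|}(-1)^{|\kappa|}=\alpha^{|\kappa|}$, and the $\lambda$-dependent binomial on the $M$-side becomes $\binom{-\frac b2-\mu^{*}}{\sigma}$. The claim thus reduces to an equality of two sums over $\sigma$ of products of two binomial coefficients. The structural difference to overcome is that $\mathcal{C}_{\kappa}$ carries $\binom{-\sigma^{*}-b}{\kappa}$, with $\kappa$ in the lower and $\sigma$ in the upper slot, whereas $M_{\kappa}$ carries $\binom{\kappa}{\sigma}$, with $\kappa$ upper and $\sigma$ lower; moreover the two $\mu$-binomials, $\binom{\mu-b}{\sigma}$ and $\binom{-\frac b2-\mu^{*}}{\sigma}$, differ by a conjugation of the upper argument and by a shift of $b$ rather than $\frac b2$.

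\textbf{Key step and cross-check.} The key step is a reciprocity for the generalized binomial coefficients that interchanges the roles of $\kappa$ and $\sigma$: I would invoke, or derive from the Baker--Forrester framework used throughout, an identity rewriting $\binom{-\sigma^{*}-b}{\kappa}$ as a ratio of the quantities $d,d',e,[b]$ times $\binom{\kappa}{\sigma}$. Inserted into the $\sigma$-sum, this is exactly what manufactures the prefactor $\frac{[b]_{\kappa}e_{\kappa}}{d_{\kappa}}$ and the summand weight $\frac{1}{[b]_{\sigma}}\frac{d'_{\sigma}d_{\sigma}}{\alpha^{|\sigma|}e_{\sigma}}$ present in $M_{\kappa}$ but absent from $\mathcal{C}_{\kappa}$, while a parallel symmetry in the upper argument turns $\binom{\mu-b}{\sigma}$ into $\binom{-\frac b2-\mu^{*}}{\sigma}$ and simultaneously absorbs the transcendental prefactor $\mathcal{E}_{\mu-b}(-1)$. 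As a consistency check on the shape of the answer, one can apply $\mathcal{F}^{e}$ (with $w$ the identity) term-by-term to the Laguerre generating function of Lemma~\ref{lemma4.1}: replacing $x$ by $2x$ and multiplying by $e^{-p_{1}(x)}(2x)^{\frac b2}$ gives $\sum_{\eta}(-\alpha)^{|\eta|}\frac{1}{d'_{\eta}}l_{\eta}(x)\mathcal{E}_{\eta}(z)$, and Proposition~\ref{proposition3.5} then reproduces $\sum_{\eta}(-\alpha)^{|\eta|}\frac{1}{d'_{\eta}}M_{\eta}(\lambda)\mathcal{E}_{\eta}(z)$ up to the nonzero factor $2^{rb/2}\mathcal{N}_{0}^{(L)}\Gamma_{\alpha}(\tfrac b2-\rho-\lambda)/\Gamma_{\alpha}(\tfrac b2)$, confirming the target shape.

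\textbf{Main obstacle.} The hard part is precisely this reciprocity together with the bookkeeping it forces: one must track every $d$-, $d'$-, $e$- and $[b]$-factor, the powers of $2$ and $\alpha$, and---most delicately---the signs, so that the transcendental factor $\mathcal{E}_{\mu-b}(-1)$ cancels and a genuine polynomial identity in $\lambda$ survives. Once the reciprocity is secured the two $\sigma$-sums match and yield $\mathcal{C}_{\kappa}(-\lambda-\rho)=(-\alpha)^{|\kappa|}\frac{1}{d'_{\kappa}}M_{\kappa}(\lambda)$, and with it the stated biorthogonal expansion.
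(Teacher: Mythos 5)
Your reduction to the coefficient identity and the polynomiality argument are fine, but the core of your argument rests on a ``reciprocity for the generalized binomial coefficients'' that you neither state precisely nor prove. You ask for an identity rewriting $\binom{-\sigma^{*}-b}{\kappa}$ as an explicit ratio of $d$, $d'$, $e$, $[b]$-factors times $\binom{\kappa}{\sigma}$, simultaneously converting $\binom{\mu-b}{\sigma}$ into $\binom{-\frac b2-\mu^{*}}{\sigma}$ and absorbing the sign $\mathcal E_{\mu-b}(-1)$. No such identity is cited in the paper or in the Baker--Forrester references it relies on, and in fact the desired term-by-term matching of the two $\sigma$-sums is essentially equivalent to the theorem itself (both sums are only equal after summation, not necessarily termwise, so even the shape of the claimed reciprocity is in doubt). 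As written, the ``key step'' is a placeholder for the entire difficulty, which you acknowledge; this is a genuine gap, not a routine verification.

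Ironically, your ``consistency check'' is the paper's actual proof, and you stopped one step short of completing it. The paper takes Lemma~\ref{lemma4.1} with $x\mapsto 2x$, multiplies by $e^{-p_1(x)}(2x)^{\frac b2}$ (using $\mathcal K_A(-x;y+1^r)=\mathcal K_A(-x;y)e^{-p_1(x)}$ to turn $\frac{2z}{1-z}$ into $\frac{1+z}{1-z}$), and integrates both sides against $\mathcal E_\eta(x)\,d\mu(x)$ with $\eta\in\mathbb N^r$, $\eta=-\lambda-\rho$. On the right this is exactly $\mathcal F^e[l_\kappa](\lambda)$, which Proposition~\ref{proposition3.5} evaluates as $2^{\frac{rb}{2}}\mathcal N_0^{(L)}[\tfrac b2]_\eta M_\kappa(\lambda)$ --- the computation you did. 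What you omitted is the left side: by Lemma~\ref{lemma3.3} applied at $t=\frac{1+z}{1-z}$, it equals $\mathcal N_0^{(L)}[\tfrac b2]_\eta\,2^{\frac{rb}{2}}(1-z)^{-b}\mathcal E_\eta\big(\frac{1-z}{1+z}\big)$, i.e.\ precisely the Cayley-transformed Jack function being expanded. Cancelling $2^{\frac{rb}{2}}\mathcal N_0^{(L)}[\tfrac b2]_\eta$ and comparing with Lemma~\ref{lemma4.2} gives $\mathcal C_\kappa(-\lambda-\rho)=(-\a)^{|\kappa|}\frac{1}{d_\kappa^{\prime}}M_\kappa(\lambda)$ for $\lambda=-(\eta+\rho)$, and your own polynomiality observation extends it to all $\lambda$. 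If you promote the check to the main argument and add the left-hand evaluation via Lemma~\ref{lemma3.3}, the unproven reciprocity becomes unnecessary (indeed, it then follows as a corollary rather than serving as an input).
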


\begin{proof} We replace $x$ by $2x$ in Lemma \ref{lemma4.1},
\[
\prod_{j=1}^r(1-z_j)^{-b}\mathcal K_{A}\left(-x; \frac{2z}{1-z}\right)
=\sum_{\kappa}(-\a)^{|\kappa|}
\frac{1} {d_{\kappa}^\prime} E_{\kappa}^{(L)}(2x)
\mathcal E_{\kappa}(z).
\]
Multiplying both sides
by $\prod_{j=1}^r (2x_j)^{\frac b 2}
e^{-p_1(x)}$ and since
$\mathcal K_{A}(-x; y+1^r)=
\mathcal K_{A}(-x; y)e^{-p_1(x)}$
we get
\[
\prod_{j=1}^r(1-z_j)^{-b}\mathcal K_{A}\left(-x; \frac{1+z}{1-z}\right)
\prod_{j=1}^r{(2x_j)^{\frac b 2}}
=\sum_{\kappa}(-\a)^{|\kappa|}
\frac{1} {d_{\kappa}^\prime
}l_{\kappa}^{(\nu)}(x)
\mathcal E_{\kappa}(z).
\]
We f\/ix an $\eta \in \mathbb N^r$, written also as $\eta=-\lam -\rho$,
and let $z$ be in a small neiborhood of $0$.
Integrating both sides against $\mathcal E_{\eta} (x)d\mu(x)$ (we omit the routine estimates guaranteeing that the interchanging of  the integration and summation is valid), we get, by Lemma~\ref{lemma3.3} (and Proposition~\ref{proposition3.5}),
\[
\mathcal N_0^{(L)}\left[\frac b2\right]_{\eta}
\prod_{j=1}^r(1-z_j)^{-b}
\mathcal E_{\eta}\left(\frac{1-z}{1+z}\right)
=\mathcal N_0^{(L)}\left[\frac b2\right]_{\eta}
\sum_{\kappa}(-\a)^{|\kappa|}
M_{\kappa}(\lam)
\frac{1} {d_{\kappa}^\prime }
\mathcal E_{\kappa}(z).
\]
Cancelling the common factor $\mathcal N_0^{(L)}[\frac b2]_{\eta}$ we get
\[
(1-z)^{-b}
 G\left(-\lam, \frac{1-z}{1+z}\right)
=
(1-z)^{-b}
\mathcal E_{\eta }\left(\frac{1-z}{1+z}\right)
=\sum_{\kappa}(-\a)^{|\kappa|}
M_{\kappa}(\eta +\rho)
\frac{1} {d_{\kappa}^\prime }
\mathcal E_{\kappa}(z).
\]
Comparing this with Lemma~\ref{lemma4.2} proves our claim for $\lam=-(\eta +\rho)$. For a general $\lam$ we observe that
$G(-\lam, \frac{1-z}{1+z})$ is an analytic function of $z$ in a neighborhood of $z=0$ and $\{\mathcal E_{\kappa}(z)\}$ for a basis for the polynomials, thus it has a power series expansion, with coef\/f\/icients analytic in~$\lam$, which are in turn determined by their restriction to the values of the form $\lam =-(\eta+\rho)$.
\end{proof}

Note that we may def\/ine a Hilbert space (similar to the Fock space or Bergman space construction e.g.~\cite{Opdam-acta}) so that the polynomials $\mathcal E_{\kappa}(z)$ form an orthogonal basis. However the function $(1-z)^{-b} G(-\lam, \frac{1-z}{1+z})$ is not in the Hilbert space and the above formula is not an expansion in the Hilbert space sense. However it might be interesting to f\/ind a formula for
\[
\sum_{\kappa}   t^{|\kappa|} (-\a)^{|\kappa|}M_{\kappa}(\eta +\rho) \frac{1} {d_{\kappa}^\prime } \mathcal E_{\kappa}(z),
 \]
which is convergent in the Hilbert space for small $t$. This would be then an analogue of Mahler's formula for the Hermite functions.

Finally we can also consider the same problem for symmetric Jack functions, which are the Heckman--Opdam spherical function for a root system of Type A.
The Laplace transform in the symmetric case has been studied earlier by Macdonald~\cite{Mac-leidennotes} and Baker--Forrester~\cite{BF-cmp97}.

For a partition $\kappa=(\kappa_1, \dots, \kappa_r)$, let $\Ome_{\kappa}(x) =\Ome_{\kappa}^{(\alpha)}(x)$ be the Jack symmetric polynomials in $r$-variables, normalized so that $\Ome_{\kappa}(1^r)=1$; see~\cite{Macd-book}. The corresponding function for $\kappa\in \mathbb C^r$, which we call the symmetric
Jack function (and which for general root system is called the Heckman--Opdam hypergeometric function) will be denoted still by $\Ome_{\kappa}(x)$,  $x\in (0, \infty)^r$.

\begin{proposition} Consider the following expansion
\[
\prod_{j=1}^r(1-z_j^2)^{-\frac b 2}\Ome_{\eta -\frac b2}
\left(\frac{1-z}{1+z}\right)
=\sum_{\kappa} Q_{\kappa}(\eta) \Ome_{\kappa}(z).
\]
The coefficient $Q_{\kappa}(\eta)$ are symmetric polynomials in $\eta$
up to a $\rho$ shift, and after a slight modification
\[
f_{\kappa}(\lam)= 2^{rb/2}\mathcal N_0^{(L)}
\frac{d_{\kappa}^\prime }{(-\a)^{|\kappa|}}
Q_{\kappa}(-\lam -\rho)
\]
they form an orthogonal basis in the space
\[
L^2\left((0, \infty)^r,
\left|\frac{\Gamma_{\alpha}\left(\frac b2-\rho -\lam\right)}
{\Gamma_{\alpha}\left(\frac b2\right)}
\right|^2
|c(\lam)|^{-2}ds\right)^{S_r}
\]
of symmetric $L^2$-functions.
\end{proposition}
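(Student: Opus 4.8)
The plan is to treat this as the $S_r$-symmetric counterpart of Theorem~\ref{theorem:4.3}, obtained by passing from the non-symmetric Jack functions to their Weyl-group symmetrizations and then reading off orthogonality from the symmetric reduction of the Plancherel formula~\eqref{planc}. Recall that the symmetric Jack function $\Ome_\lambda$ is the image of the non-symmetric function $G_\lambda$ under the symmetrizer \cite{Opdam-acta}, and that the symmetric Laguerre functions obtained by symmetrizing the $l_{\kappa}$ of Section~3 form an orthogonal basis of the symmetric subspace $L^2((0,\infty)^r,d\mu)^{S_r}$; the relevant symmetric Laplace transform is that of Macdonald~\cite{Mac-leidennotes} and Baker--Forrester~\cite{BF-cmp97}.

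For the expansion I would rerun the derivation of Theorem~\ref{theorem:4.3} with symmetric objects throughout: the symmetric analogue of Lemma~\ref{lemma4.1} expands the symmetric kernel (the symmetric ${}_0F_0$) in symmetric Laguerre polynomials, and the manipulation of Lemma~\ref{lemma4.2} carries over once $(1-z)^{-b}\mathcal E_{\eta}$ is replaced by the symmetric weight $\prod_j(1-z_j^2)^{-\frac b2}\Ome_{\eta-\frac b2}$. Integrating this expansion against $\Ome_{\eta}(x)\,d\mu(x)$ and applying the symmetric Laplace transform lemma then isolates the coefficient of $\Ome_{\kappa}(z)$ and produces the stated expansion with coefficient $Q_{\kappa}(\eta)$; equivalently one may project the identity of Theorem~\ref{theorem:4.3} onto $S_r$-invariant functions of $z$, using that $\prod_j(1-z_j^2)^{-\frac b2}$ and the Cayley transform are equivariant and $\Ome_{\eta-\frac b2}$ is symmetric. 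That $Q_{\kappa}(\eta)$ is polynomial in $\eta$ follows as in Lemma~\ref{lemma4.2} from the polynomial dependence of the binomial coefficients on their upper index (the remark after Definition~\ref{definition3.1}), and its symmetry up to a $\rho$-shift is forced by the fact that $\Ome_{\eta-\frac b2}$, hence the whole left side, depends on $\eta$ only through the Weyl-invariant combination $\eta+\rho$ (via $G_{\eta+\rho}=\mathcal E_{\eta}$ of~\eqref{G-E}), together with the linear independence of the $\Ome_{\kappa}(z)$.

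For orthogonality I would restrict~\eqref{planc} to the symmetric subspace. For $S_r$-invariant $f$ the transforms $\mathcal F^w[f]$ agree up to $c$-function factors, so that $\sum_{w\in S_r}\int_{i(\mathbb R^r)_+}\cdots\,d\tilde\mu$ collapses to the Heckman--Opdam Plancherel formula, whose spectral measure is a constant multiple of $|c(\lambda)|^{-2}\,d\omega(\lambda)$. The symmetric analogue of Proposition~\ref{proposition3.5} gives that the transform of the symmetric Laguerre function is $2^{rb/2}\mathcal N_0^{(L)}\Gamma_{\alpha}(\frac b2-\rho-\lambda)\Gamma_{\alpha}(\frac b2)^{-1}f_{\kappa}(\lambda)$ with $f_{\kappa}$ as defined; since these Laguerre functions are an orthogonal basis of the symmetric $L^2$-space, the Plancherel isometry transports them to the orthogonal basis $\{f_{\kappa}\}$ of the weighted space with weight $|\Gamma_{\alpha}(\frac b2-\rho-\lambda)/\Gamma_{\alpha}(\frac b2)|^2|c(\lambda)|^{-2}$, which is the asserted space.

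The step I expect to be the main obstacle is the bookkeeping in this symmetric reduction: tracking how the summation over $w$ and the factors $\tilde c_{w}$, $\tilde c_{w_0}$ and $c(w_0\lambda)$ entering $d\tilde\mu$ combine, on symmetric functions, into the single factor $|c(\lambda)|^{-2}$, while in parallel checking that the symmetrization of the non-symmetric coefficients $M_{\kappa}^w(\lambda)$ is normalized so as to produce exactly $f_{\kappa}(\lambda)$. Establishing the Weyl-invariance of $Q_{\kappa}$ up to the $\rho$-shift is a secondary point that must be handled with care, since the individual $M_{\kappa}(\lambda)$ are not themselves invariant.
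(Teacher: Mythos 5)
Your proposal is correct and follows essentially the same route as the paper: the paper's proof simply states that the argument is identical to the non-symmetric case, identifying $\Gamma_{\alpha}(\frac b2-\rho-\lam)Q_{\kappa}(-\lam-\rho)$ (up to a constant) as the Heckman--Opdam transform of the symmetric Laguerre function via the symmetric Laplace transform of \cite{BF-cmp97}, and then deducing orthogonality from the Plancherel formula \eqref{planc} together with the orthogonality of the Laguerre functions. Your write-up merely spells out in more detail the symmetric reduction steps (and the $c$-function bookkeeping) that the paper leaves implicit.
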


\begin{proof} The proof is identical to that for the non-symmetric case. Up to a constant, the function $\Gamma_{\alpha}(\frac b2 -\rho -\lam) Q_{\kappa}(-\lam-\rho)$ is the Heckman--Opdam transform of the Laguerre function $e^{p_1(x)}L_{\kappa}(2x)(2x)^{b/2}$, where
$L_{\kappa}(2x)(2x)^{b/2}$ is the symmetric Laguerre polynomial def\/ined in \cite{BF-cmp97}. This follows from the Laplace transform of the symmetric Jack polynomials studied in \cite{BF-cmp97}. Thus the orthogonality of the functions $f_{\kappa}$ is a consequence of the Plancherel formula \eqref{planc} and the orthogonality of the Laguerre functions \cite{BF-cmp97}.
\end{proof}

We observe the Heckman--Opdam transform of the function $L_{\kappa}(2x)e^{p_1(x)}(2x)^{b/2}$,
 say
$f_{\kappa}(\lam)$, is of form
$f_{\kappa}(\lam) = p_{\kappa}(\lam) f_{0}(\lam)$
where $f_{0}(\lam)$   is the transform of the  function $e^{p_1(x)}(2x)^{b/2}$ and~$p_{\kappa}(\lam)$ is a symmetric polynomial which corresponds to a symmetric polynomial $p_{\kappa}(U_1, \dots, U_r)$ of the Cherednik operators under the Heckman--Opdam transform. In other words, there is a Rodrigues type formula expressing
 $L_{\kappa}(2x)e^{p_1(x)}(2x)^{b/2}$ as $p_{\kappa}(U_1, \dots, U_r)$ acting on
the simple weight function $e^{p_1(x)}(2x)^{b/2}$.
 However it might be more interesting to reverse the procedure, and to f\/ind the polynomials $p$ producing a Rodrigues type formula for $L_{\kappa}(2x)e^{p_1(x)}(2x)^{b/2}$, whose transform would then be an immediate consequence; see e.g.~\cite{pz-imrn, gz-imrn} for the case of Wilson polynomials.

\subsection*{Acknowledgements}
This work was done over a period of time when both authors were visiting the Newton Institute, Cambridge UK in July 2001, the Institute of Mathematical Sciences, Singapore National University in August 2002 and MPI/HIM (Bonn) July 2007. We would like to thank the institutes for their hospitality.
S.~Sahi was supported by a grant from the National Science Foundation (NSF) and
G.~Zhang by the Swedish Research Council (VR).

We dedicate this paper to the memory of our colleague and friend Tom Branson. We thank the f\/ive referees for helpful comments on an earlier version of this paper.

\pdfbookmark[1]{References}{ref}
\LastPageEnding

\end{document}